\newtheorem{Th}{Theorem}[section]
\newtheorem{lem}[Th]{Lemma}
\newtheorem{Cor}[Th]{Corollary}
\newtheorem{Prop}[Th]{Proposition}
\theoremstyle{definition}
\newtheorem{Def}[Th]{Definition}
\theoremstyle{remark}
\newtheorem*{rem}{\bf Remarks}
\numberwithin{equation}{section}
\newcommand{\tend}[3][]{\xrightarrow[#2\to#3]{#1}}
\newcommand{\egdef}{\stackrel{\textrm {def}}{=}}
\newcommand{\ds}{\displaystyle}
\newcommand{\printdate}{\today}
\title{Some Notes on Flat Polynomials  }
\author{e. H. el Abdalaoui}
\address{Normandie University, University of Rouen
  Department of Mathematics, LMRS  UMR 60 85 CNRS\\
Avenue de l'Universit\'e, BP.12
76801 Saint Etienne du Rouvray - France .}
\email{elhoucein.elabdalaoui@univ-rouen.fr}
\urladdr{http://www.univ-rouen.fr/LMRS/Persopage/Elabdalaoui/}
\author{M. G. Nadkarni}
\address{Department of Mathematics, University of Mumbai, Vidyanagari, Kalina,  Mumbai, 400098, India}
\email{mgnadkarni@gmail.com}
\urladdr{http://insaindia.org/detail.php?id=N91-1080}
\subjclass[2010]{Primary 42A05, 42A55, 30C10; Secondary 37A30, 37A05, 37A40}
\dedicatory{}
\keywords{ simple Lebesgue spectrum, singular measure, rank one maps, Generalized Riesz products, outer functions, inner functions, flat polynomials, ultraflat polynomials, Littlewood problem.\\
 {\printdate}}
\begin{document}
\maketitle
\begin{abstract} Connection of flat polynomials with some spectral questions in ergodic theory is discussed. A necessary condition for a sequence of polynomials of the type $\frac{1}{\sqrt{N}} \big(1 +\sum_{j=1}^{N-1} z^{n_j}\big)$
to be flat in almost everywhere sense is given, which contrasts with a similar necessary condition for a sequence of polynomials to be ultraflat.
\end{abstract}

\section{Introduction}\label{intro} A sequence $P_j, j =1,2,\cdots$ of trigonometric polynomials of $L^2$ norm one is said to be flat if the sequence $|P_j|,$  $j=1,2,\cdots$ of their absolute values
converges to the constant function 1 in some sense. The sense of convergence varies according to the situation. Littlewood problem requires that the convergence be in the sup norm and the individual polynomials in the sequence have coefficients of same absolute value \cite{Littlewood},\cite{Kahane}, \cite{Queffelec}, \cite{Bom},\cite{Newman}, \cite{Borwein1},\cite{Borwein2},
\cite{Borwein3},\cite{Aistleitner}. When the convergence  required is uniform, the sequence of polynomial is often called ultraflat.  In problems connected with Barker sequences, the $L^4$ norm of the polynomials is required to be close to 1 \cite{dow}. Our interest in flat sequence of polynomials comes from spectral questions about rank one transformations in ergodic theory where the polynomials are required to have nonnegative coefficients and
their $L^1$ norms close to one or they converge in absolute value to 1 almost everywhere. It is an open question if such a flat sequence of polynomials exists in a non-trivial sense.  \cite{Bourgain},\cite{Guenais}, \cite{Abd-Nad}, \cite{Host-Mela-Parreau}, \cite{Nad}. In this note we give a necessary condition for a sequence of  absolute values of such polynomials to converge almost everywhere to 1.

\section{Ultraflat Sequence of Polynomials}

\begin{Def}\label{def1}
 Let $S^1$ denote the circle group and let $dz$ denote the normalized Lebesgue measure on it. A sequence $P_n, n=1,2,\cdots$  of analytic trigonometric polynomials with $L^2(S^1,dz)$ norm 1 and their constant terms positive, is said to be ultraflat if $| P_j(z)| \rightarrow 1$ uniformly as $j\rightarrow \infty$. It is said to be flat a.e. $(dz)$ if
 $|P_n(z)|$ converges to 1 a.e. $(dz)$. \\
\end{Def}

  The sequence $P_j(z)=1, j=1,2,\cdot$ is obviously ultraflat. More generally let
   $$P_j(z)= 1 + Z_j(z), j =1,2,\cdots$$
  where each $Z_j$ is an analytic trigonometric polynomial with zero constant term and such that $\mid\mid  Z_j\mid\mid_{\infty} \rightarrow 0$ as $j \rightarrow \infty$. Then
  $\frac{P_j(z)}{\mid\mid P_j\mid\mid_2}, j =1,2,\cdots$ is a sequence of ultraflat polynomials which we call a perturbation of the sequence of constant ultraflat polynomials
   $P_j, j =1,2,\cdots$.\\

  Let $P(z)$ be a polynomial and let $E$ denote its set of its zeros strictly inside the unit disk, $F$ the set of zeros of $P$ on or outside unit circle.  Let
  $$B(z) = \gamma\prod_{\alpha\in E}\frac{z -\alpha}{1-{\overline {\alpha}}z},  Q(z) = {\overline{\gamma}}\prod_{\alpha\in E}(1 - {\overline{\alpha}}z)\prod_{\alpha \in F}(z -\alpha),$$
   where $\gamma$ is a constant of absolute value 1 such that constant term of $Q(z)$ is positive. The function $B(z)$ which is of absolute value 1 on $S^1$ is called the inner part of $P(z)$ and $Q(z)$ the outer part of $P(z)$. We note that $P = BQ$. This factoring of $P$ is in fact  Beurling's factoring of an $H^2$ function applied to the polynomial $P$. A function of the form $B$ is called finite Blaschke product.\\

  \begin{Prop}\label{prop1}
    Given any sequence $P_j, j=1,2,\cdots$ of ultraflat polynomials, their outer parts $Q_j, j=1,2,\cdots$ form a sequence of ultraflat polynomials which is a perturbation of the constant ultraflat sequence.
  Moreover for all $j$, $|P_j(z)| = |Q_j(z)|$ on $S^1$.\\
\end{Prop}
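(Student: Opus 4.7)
The first two assertions follow quickly from the factorization. Since $B_j$ is a finite Blaschke product, $|B_j(z)|=1$ on $S^1$, so the identity $P_j = B_j Q_j$ yields $|P_j(z)|=|Q_j(z)|$ on $S^1$; this settles the last sentence of the proposition. As a consequence, $|Q_j|\to 1$ uniformly on $S^1$, and because integration on $S^1$ depends only on moduli, $\|Q_j\|_2 = \|P_j\|_2 = 1$. The constant term of $Q_j$ is positive by the choice of $\gamma$, so $(Q_j)$ meets Definition \ref{def1} and forms an ultraflat sequence.

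To realize $(Q_j)$ as a perturbation of the constant ultraflat sequence, I must produce polynomials $Z_j$ with zero constant term such that $Q_j = (1+Z_j)/\|1+Z_j\|_2$ and $\|Z_j\|_\infty\to 0$. Matching constant terms forces $Z_j = Q_j/Q_j(0) - 1$, so the goal reduces to $\|Q_j/Q_j(0)-1\|_\infty\to 0$. A preliminary step is $Q_j(0)\to 1$: since $Q_j$ is outer with positive constant term, the Beurling/Jensen formula gives $\log Q_j(0) = \int_{S^1}\log|Q_j|\,dz = \int_{S^1}\log|P_j|\,dz$, and uniform convergence of $|P_j|$ to $1$ forces this integral to $0$. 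Moreover, uniform flatness keeps $|Q_j|$ bounded away from $0$ on $S^1$ for large $j$; together with the outer property this yields no zeros of $Q_j$ in $\overline{\D}$, so $\log Q_j$ is analytic in $\D$ and continuous on $\overline{\D}$.

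The main step, and the main obstacle, is upgrading these facts to $\|Q_j/Q_j(0)-1\|_\infty\to 0$ on $S^1$. Writing $Q_j = \exp(h_j)$ with the branch $h_j(0)=\log Q_j(0)\to 0$, we have $\re h_j = \log|Q_j|$ uniformly small on $S^1$, hence by the maximum principle uniformly small on all of $\overline{\D}$; similarly, applying the minimum modulus principle to $1/Q_j$ keeps $|Q_j|$ uniformly bounded away from $0$ on $\overline{\D}$, so $|h_j|$ is uniformly bounded there. The difficulty is that on $S^1$ the imaginary part of $h_j$ is recovered from $\re h_j$ by the conjugate Hilbert transform, which is not bounded on $L^\infty$; uniform smallness of $\log|Q_j|$ therefore does not transfer automatically to uniform smallness of $\arg Q_j$. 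My plan to overcome this is to combine the uniform bound on $h_j$ in $\overline{\D}$ with a normal-families argument: any subsequential limit of $h_j$ on compact subsets of $\D$ is an analytic function whose real boundary values vanish and which vanishes at $0$, hence is identically zero, so $h_j\to 0$ locally uniformly in $\D$; the polynomial structure of $Q_j$, together with the uniform separation of its zeros from $S^1$ coming from uniform flatness, would then be used to boost this to uniform convergence on $\overline{\D}$, and exponentiating yields $\|Q_j-1\|_\infty\to 0$ on $S^1$, which is equivalent to the desired perturbation statement once divided by $Q_j(0)$.
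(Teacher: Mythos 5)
Your first two paragraphs follow the paper's own argument exactly: $|P_j|=|Q_j|$ on $S^1$ because $|B_j|=1$ there, and $Q_j(0)\to 1$ via the mean value property applied to $\log|Q_j|$, after noting (as the paper also does) that ultraflatness lets one assume $Q_j$ has no zeros on $S^1$, so that $\log Q_j$ is holomorphic on a neighbourhood of $\overline{\D}$. Where you diverge is at the end: the paper's proof stops at $Q_j(0)\to 1$ and declares the perturbation property ``clear,'' whereas you correctly observe that, under the paper's definition of a perturbation (which requires $\|Z_j\|_\infty\to 0$, i.e.\ uniform convergence of $Q_j/Q_j(0)$ to $1$ on $S^1$), this is precisely the nontrivial point, the obstruction being that conjugation is not bounded on $L^\infty$.

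Your plan for closing that gap, however, does not work, and two of its inputs are false. First, uniform flatness does not force the zeros of $Q_j$ away from $S^1$: the outer polynomials $(R_n^n-z^n)/R_n^n$ with $R_n=1+c_n/n$ and $c_n\to\infty$ slowly have modulus within $e^{-c_n}$ of $1$ everywhere on $S^1$, yet all $n$ zeros lie at distance $c_n/n\to 0$ from the circle. Second, bounding $|Q_j|$ above and below on $\overline{\D}$ controls only $\re h_j=\log|Q_j|$, not $\textrm{Im}\, h_j=\arg Q_j$, so ``$|h_j|$ uniformly bounded on $\overline{\D}$'' is unjustified; normal families then give $h_j\to 0$ only locally uniformly in the open disk, and no boost to the boundary is possible. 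Indeed the implication you are trying to prove fails: take $h_N(z)=-i(\log N)^{-1/2}\sum_{k=1}^N z^k/k$, so that $\re h_N(e^{i\theta})=(\log N)^{-1/2}\sum_{k=1}^N \sin(k\theta)/k$ is uniformly $O((\log N)^{-1/2})$ while $\textrm{Im}\, h_N(1)\approx-\sqrt{\log N}$. Then $e^{h_N}$, and a sufficiently close Taylor truncation of it (a zero-free, hence outer, polynomial with constant term and $L^2$ norm tending to $1$), has modulus tending to $1$ uniformly on $S^1$, but its argument sweeps from $\approx 0$ to $\approx-\sqrt{\log N}$ and hence passes through $-\pi$, so its sup-distance from the constant $1$ stays near $2$. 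Thus the final step cannot be repaired as stated: with the strong reading of ``perturbation'' the conclusion itself fails, and what both you and the paper actually establish is only the weaker statement $Q_j(0)\to 1$.
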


\begin{proof}
   That $\mid P_j(z)\mid = \mid Q_j(z)\mid, j =1,2 \cdots$ follows from the construction of inner and outer factors of $P_j$. Since $\mid P_j\mid, j =1,2,\cdots$ converges to 1 uniformly, we may assume without loss of generality that
  $P_j$'s, hence $Q_j$'s, do not vanish on $S^1$. Also, being outer, $Q_j$'s have no zeros inside the the unit disk. Therefore, for each $j$, $\log \mid Q_j\mid$ is the real part of the holomorphic
  function $ \log Q_j$ on an open set containing the closed unit disk. By the mean value property of harmonic function we see that
  $$\log \mid Q_j(0) \mid = \int_{S^1}\log \mid Q_j(z)\mid dz \rightarrow 0~~{\rm{as}} ~~j \rightarrow \infty, $$ since $\mid P_j(z)\mid \rightarrow 1$ uniformly as $j\rightarrow \infty$.
  Also, by construction $Q_j(0)$ is positive, we see that $Q_j(0) = \mid Q_j(0)\mid \rightarrow 1$ as
  $j\rightarrow \infty$. Clearly then $Q_j, j=1,2,\cdots$ is a perturbation of the sequence of constant ultraflat polynomials.\\
\end{proof}
  Despite  rather trivial nature of a sequence of  ultraflat polynomials when divided by their inner factors, their importance stems from the following questions raised by J. E. Littlewood \cite{Littlewood}.
  %\begin{que}\mbox{}
  \begin{enumerate}[(1)]
  \item Does there exist a sequence of ultraflat polynomials $P_j, j =1,2,\cdots$ such that for each $j$,  the coefficients of $P_j$ are all equal in absolute value?
  \item Can these coefficients in addition be real ?
   \end{enumerate}
   %\end{que}
   J-P. Kahane \cite{Kahane} has answered the first question in the affirmative. The second question remains open.
 Also flat sequence of polynomials, in particular ultraflat sequence of polynomials, appear  naturally in discussion of some spectral questions in ergodic theory. The papers of  Bourgain \cite{Bourgain} and M. Guenais \cite{Guenais} are the two  early papers connecting $L^1(S^1, dz)$ flatness with spectral questions. Solution of some of these problems depends of the existence of certain kind of flat sequence of polynomials \cite{Abd-Nad} (see section 3, Remarks ).\\
 Kahane's solution can be viewed in the following way: there is an ultraflat sequence of outer polynomials $Q_j, j =1,2,\cdots$ which when multiplied by appropriate inner functions yields an ultraflat sequence of polynomials $P_j, j=1,2,\cdots$ such that for each $j$, the coefficients of $P_j$ are equal in absolute value.\\

It may seem natural to conjecture, in the light of the proposition above, that if $P_j, j=1,2,\cdots$ is a flat sequence a.e $(dz)$  then the constant terms of their outer parts
  converge  to 1. This however is false since for any given $\lambda$, $-\infty \leq \lambda \leq 0$, it is possible to give a sequence $ P_j, j=1,2,\cdots$ of  polynomials which
   is flat a.e. $(dz)$ and such that $\int_{S^1}\log \mid P_j(z)\mid dz\rightarrow \lambda $ as $j\rightarrow \infty$ ; the sequence $Q_j, j =1,2,\cdots$ of their outer parts will be flat a.e.$(dz)$ with the same property.\\

    We will derive here a necessary condition for a sequence of polynomials to be ultraflat.\\

 Consider an analytic trigonometric  polynomial $P(z)$, with $n$ terms, of $L^2(S^1,dz)$ norm 1, $P(0) > 0$. Then
   for all $z \in S^1$, $1-\epsilon \leq \mid P(z) \mid^2 \leq 1+\epsilon$ where
   $\epsilon = \sup_{z\in S^1}|| P(z)|^2 -1|$. For any
   continuous $f$ on $S^1$,
   $$(1-\epsilon)\int_{S^1}\mid f\mid^2dz \leq \int_{S^1}\mid f(z)\mid^2 \mid P(z)\mid^2dz \leq
   (1+\epsilon)\int_{S^1}\mid f(z)\mid^2dz;$$
   in particular, if  $f(z) = \sum_{j=1}^kz^{m_j}$, a sum of characters of $S^1$, then
   $$(1-\epsilon)k \leq \sum_{i=1}^k\sum_{j=1}^k \int_{S^1}z^{m_i-m_j}\mid P(z)\mid^2dz\leq (1+\epsilon)k,  \eqno (1)$$

   Now
   $$1-\epsilon \leq \mid P(z)\mid^2 = 1 + \sum_{\overset{j= -N,}{j\neq 0}}^{N}b_j z^{n_j}
   \leq 1+\epsilon,$$
   for some suitable non-zero $b_j =\overline {b_{-j}}$, and integers $n_j=-n_{-j}, -N \leq j \leq N, j\neq 0 $. Note that $N \leq n(n-1).$
   Putting $z =1$ we get

   $$  -\epsilon \leq  \sum_{\overset{j= -N,}{j\neq 0}}^N b_j \leq  \epsilon$$

   $$\Big(\sum_{\overset{j= -N,}{j\neq 0}}^Nb_j\Big)^2  \leq \epsilon^2 < \epsilon, \eqno (2)$$

   Consider now the functions $z^{n_j} - {\overline{b_j}}, -N\leq j \leq N, j \neq 0$. The gram matrix of these vectors in $L^2(S^1, \mid P(z)\mid^2dz)$ has entries
   $$\int_{S^1}z^{n_i-n_j}\mid P(z)\mid^2dz - {\overline{b_i}}b_j.$$

Sum of these entries, denoted by $r = r(P)$, can be seen to satisfy (by equations (1) and (2) above):\\
 $$2N(1-\epsilon) -\epsilon \leq r \leq 2N(1+\epsilon) + \epsilon$$
Thus the sum of the entries of the gram matrix in question is of order $N$ and diagonal entry of the $i$th row  is  $1-| b_i|^2$.\\

We record  this calculation as:

\begin{Prop}\label{prop1}
 Let $P(z)$ be an analytic polynomial of $L^2(S^1,dz)$ norm 1
and let $\mid P(z) \mid^2 = 1 + \sum_{\overset{j= -N,}{j\neq 0}}^N b_jz^{n_j}, \forall j, b_j \neq 0 $.  Let $r(P)$ denote the sum of the entries of the gram matrix of the random variable
$z^{n_j} - \overline{b_j}, -N\leq j \leq N, j \neq 0$. Then
$$2N(1-\epsilon) -\epsilon \leq r \leq 2N(1+\epsilon) + \epsilon$$
where $\epsilon = \sup_{z\in S^1}\mid\mid P(z)\mid^2 -1\mid$.
\end{Prop}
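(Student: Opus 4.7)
The key observation is that the expansion
$$|P(z)|^2 = 1 + \sum_{\substack{j=-N \\ j\neq 0}}^{N} b_j z^{n_j}$$
combined with $b_{-j}=\overline{b_j}$ and $n_{-j}=-n_j$ gives, for each $i\ne 0$,
$$\int_{S^1} z^{n_i}\,|P(z)|^2\,dz = \overline{b_i}, \qquad \int_{S^1} z^{-n_j}\,|P(z)|^2\,dz = b_j,$$
since the only surviving term of the Fourier expansion of $|P|^2$ against $z^{n_i}$ is the one with $n_{-i}=-n_i$. My plan is to use this to simplify the Gram matrix entries, then sum everything by recognizing the result as a square of a single sum of characters, to which equation (1) applies directly.

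First I would expand a generic Gram matrix entry and simplify the cross terms using the two identities above:
\begin{align*}
G_{ij} &= \int_{S^1}(z^{n_i}-\overline{b_i})(z^{-n_j}-b_j)\,|P(z)|^2\,dz \\
&= \int_{S^1} z^{n_i-n_j}|P(z)|^2\,dz - \overline{b_i}\,b_j - b_j\,\overline{b_i} + \overline{b_i}\,b_j \\
&= \int_{S^1} z^{n_i-n_j}|P(z)|^2\,dz - \overline{b_i}\,b_j,
\end{align*}
which is exactly the formula stated in the excerpt. In particular the diagonal entries are $1-|b_i|^2$.

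Next I would sum all $(2N)^2$ entries. Setting $f(z)=\sum_{i\neq 0} z^{n_i}$, a sum of $k=2N$ distinct characters, the bilinear part telescopes into $\int_{S^1}|f(z)|^2|P(z)|^2\,dz$, while the remainder collapses to $\bigl(\sum_j b_j\bigr)^2$ (this is the square of a real number, since $b_{-j}=\overline{b_j}$). Thus
$$r(P) = \int_{S^1}|f(z)|^2\,|P(z)|^2\,dz - \Bigl(\sum_{j\ne 0} b_j\Bigr)^2.$$
The first term is bounded above and below by $2N(1\pm\epsilon)$ by the inequality (1) applied with $k=2N$; the second term lies in $[0,\epsilon^2]\subset[0,\epsilon]$ by (2). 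Combining these two bounds (using the worst direction of the sign in each case, which only costs an extra $\epsilon$ on the upper end) gives
$$2N(1-\epsilon) - \epsilon \;\le\; r(P) \;\le\; 2N(1+\epsilon) + \epsilon,$$
as required.

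There is no real obstacle here; the only subtlety is verifying that $\int z^{n_i}|P|^2\,dz=\overline{b_i}$, which rests entirely on orthogonality of characters together with the reality constraint $b_{-j}=\overline{b_j}$. Once this identity is in hand, the collapse of the Gram matrix sum into $\int|f|^2|P|^2 - (\sum b_j)^2$ is automatic, and the quantitative bounds are immediate from the two inequalities already established just above the proposition.
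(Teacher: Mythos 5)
Your proof is correct and follows essentially the same route as the paper, which presents this argument as the calculation immediately preceding the proposition: the Gram entries reduce to $\int_{S^1} z^{n_i-n_j}|P|^2\,dz-\overline{b_i}b_j$, the sum of entries becomes $\int_{S^1}|f|^2|P|^2\,dz-\bigl(\sum_j b_j\bigr)^2$ with $f=\sum_i z^{n_i}$, and the bounds follow from inequalities (1) (with $k=2N$) and (2). Your explicit verification that $\int_{S^1}z^{n_i}|P|^2\,dz=\overline{b_i}$ simply fills in a step the paper leaves implicit.
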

\begin{Cor}
 Let $P_n, n=1,2,\cdots $ be a sequence of polynomials of $L^2(S^1,dz)$ norm 1. Let $$\mid P_n(z) \mid^2 = 1 + \sum_{\overset{j = -N_n,}{j\neq 0}}^{N_n}b_{j,n}z^{k_{j,n}}, \forall j, b_{j,n} \neq 0.$$ Then\\ (a) if $P_n, n=1,2,\cdots$ are uniformly bounded then so are the ratios $\frac{r(P_n)}{N_n}, n=1,2,\cdots$,\\ (b) if the sequence $P_n, n=1,2,\cdots $ is ultraflat then $\frac{r(P_n)}{2N_n} \rightarrow 1$ as $n\rightarrow \infty$. In particular this holds for any ultraflat sequence of Kahane polynomials.\\
\end{Cor}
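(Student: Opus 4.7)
The plan is to derive both parts directly from the two-sided inequality already established in Proposition~1.2, so no new ingredient is required. Applying that proposition to each $P_n$ with $\epsilon_n \egdef \sup_{z \in S^1}\bigl||P_n(z)|^2 - 1\bigr|$ gives
$$2N_n(1-\epsilon_n) - \epsilon_n \;\leq\; r(P_n) \;\leq\; 2N_n(1+\epsilon_n) + \epsilon_n.$$
Both parts of the Corollary are now obtained by dividing this by $N_n$ (or $2N_n$) and extracting the appropriate estimate on $\epsilon_n$.

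For part~(a), I would first observe that a uniform sup-norm bound $\|P_n\|_\infty \leq M$ produces a uniform bound $\epsilon_n \leq M^2+1$. Dividing the right-hand inequality by $N_n$ then yields $r(P_n)/N_n \leq 2(1+\epsilon_n) + \epsilon_n/N_n$, bounded independently of $n$. The lower side is automatic, since $r(P_n)$ is the sum of the Gram matrix entries of the vectors $z^{n_j} - \overline{b_{j,n}}$ in $L^2(|P_n|^2\,dz)$ and therefore equals $\bigl\|\sum_j (z^{n_j} - \overline{b_{j,n}})\bigr\|^2 \geq 0$; this is the single structural remark the proof needs, and I would record it once at the start.

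For part~(b), ultraflatness is by definition the statement $\epsilon_n \to 0$. Dividing the proposition's inequality by $2N_n$ gives
$$(1-\epsilon_n) - \frac{\epsilon_n}{2N_n} \;\leq\; \frac{r(P_n)}{2N_n} \;\leq\; (1+\epsilon_n) + \frac{\epsilon_n}{2N_n}.$$
Since the hypothesis that every $b_{j,n}$ is nonzero forces $N_n \geq 1$ (else $|P_n|^2 \equiv 1$ and the ratio is undefined), the correction term $\epsilon_n/(2N_n)$ is dominated by $\epsilon_n/2 \to 0$, and the squeeze yields $r(P_n)/(2N_n) \to 1$. The Kahane case follows at once, because Kahane's construction provides ultraflat polynomials with unimodular coefficients, and these in particular satisfy the hypotheses of~(b).

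There is essentially no obstacle; the only point requiring a moment of care is that we do \emph{not} need $N_n \to \infty$ to deduce~(b). What matters is only that $N_n \geq 1$ uniformly, which is built into the statement by the convention that the expansion $|P_n|^2 = 1 + \sum_{j} b_{j,n} z^{k_{j,n}}$ records only nonzero coefficients. The entire proof is thus a direct bookkeeping consequence of Proposition~1.2.
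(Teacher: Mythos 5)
Your proposal is correct and follows exactly the route the paper intends: the corollary is stated as an immediate consequence of the two-sided inequality $2N(1-\epsilon)-\epsilon \leq r \leq 2N(1+\epsilon)+\epsilon$ of the preceding proposition, and your bookkeeping (bounding $\epsilon_n$ uniformly for (a), squeezing with $\epsilon_n \to 0$ for (b)) is precisely the argument the paper leaves implicit. The only cosmetic remark is that ultraflatness is defined via $|P_n|\to 1$ rather than $|P_n|^2\to 1$, but the equivalence of the two is trivial, so nothing is missing.
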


The Gauss-Fresnel polynomials and Hardy-Littlewood polynomials are defined respectively as follows
\begin{eqnarray*}
G_n(z)&=&\frac1{\sqrt{n}}\sum_{k=0}^{n-1}g(k) z^k, {\textrm{~~where~~}} g(k)=\exp\Big(\frac{\pi i k^2}{n}\Big),\\
H_N(z)&=&\frac1{\sqrt{n}}\Big(1+\sum_{k=1}^{n-1}v(k) z^k\Big), {\textrm{~~where~~}} v(k)=\exp\Big(\frac{2\pi i (ck\ln(k))}{n}\Big).
\end{eqnarray*}
Our terminology is due to the fact that the first polynomials are connected to the Gaussian sums and the Fresnel integral and the second are studied by Hardy-Littlewood in \cite{H-L}.\\

Furthermore, it is well known that the Gauss-Fresnel polynomials and Hardy-littlewood polynomials verify

$$\Big|G_N(e^{2\pi i\theta})\Big| \leq 3C\Big(\sqrt{2}+\frac1{\sqrt{2}}\Big), \forall \theta \in [0,1),  \eqno (3)$$

and

$$\Big|H_N(e^{2\pi i\theta})\Big| \leq  K, \forall \theta \in [0,1).   \eqno (4)$$

 where $C$ and $K$ are constants independent of $N$.

 These inequalities follow as an application of the van der Corput method.
A nice account on this method can be found in \cite[p.61-67]{Tichmarsh}, \cite[p.31-37]{Salem}, \cite{Graham}, \cite[p.15-18]{N-K}.  We present the proof of inequalities (3) and (4). The principal ingredient in the proof is the following lemma due to van der Corput \cite[p.199]{Zygmund}, \cite[p.15-18]{N-K}.

\begin{lem} Suppose that $f$ is a real valued function with two continuous derivatives on $[a,b]$. Suppose also that there is some $\rho>0$ such that
$$|f''(u)| \geq \rho, \forall u \in [a,b].$$ Then,
\[\Big|\sum_{a \leq n \leq b}\exp(2 \pi i f(n))\Big| \leq  \Big(|f'(b)-f'(a)|+2\Big) \Big(\frac4{\sqrt{\rho}}+3\Big).
\]
\end{lem}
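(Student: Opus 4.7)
The plan is to combine two classical ingredients: a Kuzmin--Landau type first-derivative bound for exponential sums whose phase derivative stays away from integers, and a partition of $[a,b]$ based on how close $f'$ is to each integer. Since $f''$ is continuous with $|f''|\geq \rho > 0$, the intermediate value theorem forces $f''$ to have constant sign on $[a,b]$, so $f'$ is strictly monotonic; without loss of generality assume $f'$ is increasing.

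The preliminary lemma I would establish first is a standard \emph{first derivative test}: if $g$ has a monotonic continuous derivative on $[c,d]$ and $\|g'(x)\| \geq \delta$ for every $x$ (where $\|\cdot\|$ denotes distance to the nearest integer), then $\bigl|\sum_{c\leq n\leq d} e^{2\pi i g(n)}\bigr| \leq A/\delta$ for some absolute constant $A$. This is an Abel summation argument: one writes $e^{2\pi i g(n)} = (e^{2\pi i g(n)} - e^{2\pi i g(n-1)})/(1 - e^{-2\pi i (g(n)-g(n-1))})$, controls the denominator uniformly using the $\delta$-separation of $g'$ from integers together with the monotonicity of $g'$, and sums the telescoping numerator trivially.

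Now fix a small parameter $\eta \in (0, 1/2)$ to be optimized at the end. Since $f'$ is increasing, the range $[f'(a), f'(b)]$ meets at most $|f'(b)-f'(a)|+2$ intervals of the form $[k-\tfrac12,k+\tfrac12]$ with $k \in \Z$. For each such $k$, let $J_k$ be the preimage of $[k-\tfrac12,k+\tfrac12]$ under $f'|_{[a,b]}$, and split $J_k = C_k \cup O_k$ where $C_k = \{x \in J_k : |f'(x)-k|\leq \eta\}$. The mean value theorem combined with $|f''|\geq \rho$ gives $|C_k|\leq 2\eta/\rho$, so the trivial bound contributes at most $2\eta/\rho + 1$ summands from $C_k$. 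On each of the two components of $O_k$, the first derivative test applies with $\delta = \eta$ and yields a bound $A/\eta$. Summing over the $\leq |f'(b)-f'(a)|+2$ relevant values of $k$ and optimizing $\eta \sim \sqrt{\rho}$ produces the claimed inequality $(|f'(b)-f'(a)|+2)(4/\sqrt{\rho}+3)$.

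The main obstacle is the bookkeeping of constants: one must obtain the first derivative test with a sharp enough constant $A$, and the trivial contribution $2\eta/\rho + 1$ from each $C_k$ combined with the Kuzmin--Landau contribution from $O_k$ must, after optimization in $\eta$, assemble into exactly the constants $4/\sqrt{\rho}$ and $3$ appearing in the statement. All the conceptual content sits in the dichotomy: either $f'$ is far from integers, in which case the first derivative test applies, or $f'$ is close to an integer, in which case $|f''|\geq \rho$ forces the corresponding subinterval to be short.
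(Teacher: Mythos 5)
The paper does not actually prove this lemma: it is quoted verbatim as van der Corput's classical second-derivative test, with the proof deferred to the cited references (Zygmund, p.~199, and Kuipers--Niederreiter, pp.~15--18). So there is no in-paper argument to compare yours against; what you have written is, in outline, exactly the standard proof that appears in those references. Your decomposition is the right one: monotonicity of $f'$ from the sign-definiteness of $f''$, a partition of $[a,b]$ according to which unit interval $[k-\tfrac12,k+\tfrac12]$ contains $f'$, a further split of each piece into the ``major arc'' $C_k$ where $|f'-k|\le\eta$ (handled trivially, with $|C_k|\le 2\eta/\rho$ by the mean value theorem) and the two ``minor arc'' components where the Kuzmin--Landau first-derivative test applies with $\delta=\eta$, followed by optimization in $\eta$. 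The one substantive thing you leave undone --- and you say so yourself --- is the constant bookkeeping, which is not cosmetic here because the statement asserts the specific bound $(|f'(b)-f'(a)|+2)(4/\sqrt{\rho}+3)$ rather than an unspecified absolute constant. To close that gap you need the Kuzmin--Landau bound in the sharp form $\bigl|\sum e^{2\pi i g(n)}\bigr|\le \cot(\pi\delta/2)+O(1)\le \tfrac{2}{\pi\delta}+O(1)$; then the two minor-arc components contribute $\tfrac{4}{\pi\eta}$, the major arc contributes $\tfrac{2\eta}{\rho}+1$, and choosing $\eta=\sqrt{2\rho/\pi}$ gives $\tfrac{4\sqrt{2}}{\sqrt{\pi\rho}}\approx \tfrac{3.2}{\sqrt{\rho}}$ per block, which together with the $O(1)$ endpoint terms fits inside $4/\sqrt{\rho}+3$. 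So your plan is sound and matches the source the paper relies on, but as written it is a correct skeleton rather than a complete proof of the stated inequality.
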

It  suffice now to take in the first case $f(u)=u\theta+\ds \frac{u^2}{2N}$ with $a=0$, $b=N-1$, and in the second case
$f(u)=cu \ln(u)+u\theta$ with $[1,N]=\bigcup_{j=0}^{n-1}[2^j,2^{j+1}] \bigcup [2^n,N]$, $2^n \leq n < 2^{N+1}.$
We therefore have as a corollary of proposition 2.3

 \begin{Th}\label{th1}
  The ratios $\frac{r(G_N)}{N}, \frac{r(H_N)}{N} $ are bounded above.
\end{Th}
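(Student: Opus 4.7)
The theorem follows immediately by combining part (a) of the corollary just proved with the sup-norm bounds (3) and (4). That corollary states that if a sequence $\{P_n\}$ of polynomials of $L^2(S^1,dz)$ norm one is uniformly bounded on $S^1$, then the ratios $r(P_n)/N_n$ are uniformly bounded. Since $G_N$ and $H_N$ both have $L^2$ norm one (each non-zero coefficient has modulus $1/\sqrt{N}$), it suffices to verify (3) and (4), and for this the only real tool needed is van der Corput's lemma.

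For (3) I would apply the lemma to $f(u) = u\theta + \dfrac{u^2}{2N}$ on $[0, N-1]$, where $|f''(u)| = 1/N$ and $|f'(N-1) - f'(0)| = (N-1)/N \leq 1$. Taking $\rho = 1/N$, the lemma gives
\[
\Bigl|\sum_{k=0}^{N-1} e^{2\pi i f(k)}\Bigr| \leq 3\bigl(4\sqrt{N} + 3\bigr),
\]
and dividing by $\sqrt{N}$ yields a uniform bound in $\theta$ of exactly the form claimed in (3).

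For (4) the function $f(u) = cu\ln u + u\theta$ has $f''(u) = c/u$, which is not bounded below by a fixed $\rho$ on the whole of $[1,N]$. The standard remedy, indicated in the excerpt, is the dyadic decomposition $[1,N] = \bigcup_{j=0}^{n-1}[2^j, 2^{j+1}] \cup [2^n, N]$ with $2^n \leq N < 2^{n+1}$: on $[2^j, 2^{j+1}]$ one has $f''(u) \geq c/2^{j+1}$, while $|f'(2^{j+1}) - f'(2^j)| = c\ln 2$ is uniform in $j$. Van der Corput on each block gives an exponential sum of order $2^{j/2}$; the geometric series $\sum_{j=0}^n 2^{j/2}$ is of order $\sqrt{N}$, and the ragged tail $[2^n, N]$ is handled by the same estimate with $\rho = c/N$. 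Dividing by $\sqrt{N}$ yields (4). The only non-routine step is this dyadic bookkeeping for $H_N$; the corollary is then invoked with $P_n = G_n$ and $P_n = H_n$ to conclude.
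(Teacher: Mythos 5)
Your proposal is correct and follows the paper's own route exactly: verify the uniform sup-norm bounds (3) and (4) via van der Corput's lemma with $f(u)=u\theta+u^2/(2N)$ on $[0,N-1]$ in the first case and $f(u)=cu\ln u+u\theta$ with the dyadic decomposition of $[1,N]$ in the second, then invoke part (a) of the corollary to Proposition 2.3. In fact you supply more of the bookkeeping (the values of $f''$, the $f'$ increments, and the geometric summation) than the paper does, which leaves these details to the reader.
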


 Newman in \cite{Newman} established the $L^1$-flatness of the Gauss-Fresnel polynomials. Besides, Littlewood proved in \cite{Littlewood-I} that the Gauss-Fresnel polynomials converge in measure to 1. But, since the $P_N$'s are bounded, Littlewood result implies convergence of $\mid\mid P_N\mid\mid_1$ to 1 as $N\rightarrow \infty$ hence Newman's result.\\

\begin{rem}
We note that the sequence of partial sums of the power series expansion of a finite Blaschke product is ultraflat, where the equality in absolute value of all the coefficients does not hold. If the zeros of the Blaschke product are all real  then  its power series has real coefficients, so the sequence of partial sums of such a power series is an ultraflat sequence with real coefficients. We can use these ultraflat sequences to produce non-dissipative, ergodic non-singular maps with simple Lebesgue component in the spectrum of the associated unitary operator \cite{Abd-Nad1}. It is easy to show  that there is no ultraflat sequence of polynomials whose coefficients are non-negative and uniformly bounded away from 1. However, it does not seem to be known if there is a sequence of flat polynomials in almost everywhere $ (dz)$ sense with non-negative coefficients uniformly bounded away from 1. It is known that if  there is such a sequence then there exists non-dissipative, ergodic non-singular transformations  with simple Lebesgue spectrum for the associated unitary operator \cite{Abd-Nad}.
\end{rem}

Consider now the class B of polynomials of the type $\frac{1}{\sqrt N}\Big(1 + \ds \sum_{j=1}^{N-1}z^{n_j}\Big)$. As\linebreak Bourgain \cite{Bourgain} has shown the  spectrum of a measure preserving
rank one transformation is given (up to possibly  some discrete points) by a generalized Riesz product made out of such polynomials. It is not known if there is a measure preserving rank one transformation with simple Lebesgue spectrum (in orthocomplement of constant functions).
This is equivalent to the question if there exist a flat sequence of polynomials in a.e. sense from the class B \cite{Abd-Nad}. We will give below a necessary condition for a sequence of polynomials  from the class B to be flat a.e. $(dz)$  which contrasts with the necessary condition for ultraflat sequences derived in section 1.
Consider a sequence of distinct polynomials $P_j, j=1,2,\cdots$ of the type
$$P_j(z) = \frac{1}{\sqrt{m_j}}\Big(1 +\sum_{k=1}^{m_j-1}z^{R_{k,j}}\Big),  \eqno (3) $$
Such a sequence can not be ultraflat since $P_j(1) = \sqrt{m_j}\rightarrow \infty$ as $j\rightarrow \infty$. As mentioned above it is not known if such a sequence can be flat a.e. $(dz)$.
 However,  we will show in next section that if a sequence of polynomials $P_j, j=1,2,\cdots $ of this  type converges to 1 a.e.$(dz)$ then  $\frac{r_j}{N_j} \rightarrow \infty$ as $j\rightarrow \infty$. This will follow from a more general result we prove below (Theorem 4.1 ). We will  need some ideas and results about generalized product \cite{Abd-Nad} which we give below, where Lemma 4.2 is new.

\section{Dissociated Polynomials and Generalized Riesz Products}

 Consider the following two products:
\begin{eqnarray*}
(1+z)(1+z) &=& 1+z+z +z^2 = 1 +2z +z^2,\\
(1+z)(1+z^2) &=& 1+z+z^2+z^3.
\end{eqnarray*}
In the first case we group terms with the same power of $z$, while in the second case all the powers of $z$ in the formal expansion are distinct. In the second case we say that the polynomials $1 + z$ and $1 + z^2$ are dissociated. More generally we say that a set of trigonometric polynomials is dissociated if in the formal expansion of product of any finitely many of them, the powers of $z$ in the non-zero terms are all distinct \cite{Abd-Nad}.\\

If $P(z) = \ds \sum_{j=-m}^m a_jz^j, Q(z) = \ds \sum_{j=-n}^{n}b_jz^j$, $m \leq n$, are two trigonometric  polynomials then
for some $N$, $P(z)$ and $Q(z^N)$ are dissociated. Indeed
$$P(z)\cdot Q(z^N) = \sum_{i=-m}^m\sum_{j=-n}^n a_ib_jz^{i+Nj}.$$
If we choose $N > 2n$, then we will have two exponents, say $i +Nj$ and $u+Nv$, equal if and only if $i-u = N(v-j)$ and since  $N$ is bigger than $2n$, this can happen if and only if $i=u$ and $j=v$. More generally, given any sequence $P_1, P_2, \cdots$ of polynomials one can find integers $1 = N_1 < N_2 < N_3 < \cdots,$  such that $P_1(z^{N_1}), P_2(z^{N_2}), P(z^{N_3}), \cdots$ are dissociated. Note that since the map $z \longmapsto z^{N_i}$ is measure preserving, for any $p > 0$  the $L^p(S^1, dz)$ norm of $P_i(z)$ and $P_i(z^{N_i})$ remain the same, as also their logarithmic integrals, i.e, $\int_{S^1} \log \mid P_i(z) \mid dz = \int_{S^1}\mid \log \mid P_i(z^{N_i})\mid dz $.\\

Now let $P_1, P_2,\cdots$ be a sequence of polynomials, each $P_i$ being  of $L^2(S^1, dz)$ norm 1. Then the constant term of each $\mid P_i(z)\mid^2$ is 1. If we choose  $1 = N_1 < N_2 < N_3 \cdots$  so that $\mid P_1(z^{N_1})\mid^2, \mid P_2(z^{N_2})\mid^2, \mid P_3(z^{N_3})\mid^2, \cdots$ are dissociated, then the constant term of each finite product
$$\prod_{j=1}^n\mid P_i(z^{N_i})\mid^2$$ is one so that each finite product  integrates to 1 with respect to $dz$. Also, since $\mid P_j(z^{N_i}) \mid^2, j =1,2, \cdots$ are dissociated,
for any given $k$, the $k$-th Fourier coefficient of $\prod_{j=1}^n\mid P_j(z^{N_j})\mid^2$ is either zero for all $n$, or, if it is non-zero for some $n = n_0$ (say), then its remains the same  for all $n \geq n_0$. Thus the measures $(\prod_{j=1}^n| P_j(z^{N_j})|^2)dz, n=1,2,\cdots$ admit a weak limit
on $S^1$. It is called the generalized Riesz product of the polynomials $\mid P_j(z^{N_j})\mid^2, j=1,2,\cdots$. Let $\mu$ denote this measure. It is known \cite{Abd-Nad} that
$ \prod_{j=1}^k |P_j(z^{N_j})|, k=1,2,\cdots$, converge in $L^1(S^1,dz)$ to
$\sqrt{\frac{d\mu}{dz}}$ as $k\rightarrow \infty$. It follows from this that if $\prod_{j=1}^k\mid P(z^{N_j})\mid , j=1,2,\cdots$ converge a.e. $(dz)$ to a finite positive value then $\mu$ has a part which is equivalent to Lebesgue measure.

\section{Flat a.e.$(dz)$ Sequence of Polynomials: A Necessary Condition}

    Consider a polynomial of norm 1 in $L^2(S^1,dz)$. Such a polynomial with $m$ non-zero coefficients can be written as:\\
 $$P(z) = \epsilon_0\sqrt {p_0} + \epsilon_1\sqrt {p_1}z^{R_1} + \cdots + \epsilon_{m-1}\sqrt{p_{m-1}}z^{R_{m-1}},  ~~~\eqno (4)$$
 where each $p_i$ is positive and their sum is 1, and where $\epsilon_i$'s are complex numbers of absolute value 1. Such a $P$ gives a probability measure $\mid P(z)\mid^2dz$ on the circle group which we denote by $\nu$. Now $\mid P(z)\mid^2$ can be written as
$$\mid P(z)\mid^2 = 1 + \sum_{\overset{k=-N,}{k\neq 0}}^{N}a_kz^{n_k} ,$$
where each $n_k$ is of the form $R_l - R_k,$ and each $a_k$ is a sum of terms of the type ${\epsilon_i{\overline{\epsilon_j}}\sqrt{p_i}}{\sqrt{p_j}}, i\neq j$, $a_i = \overline {a_{-i}}, 1\leq i \leq N$
We will write $$L = \sum_{\overset{k=-N,}{k\neq 0}}^{N}a_k = \mid P(1)\mid^2 -1.$$\\

Consider the special case when each $\epsilon_i =1$. Then
 $$L = \sum_{\overset{0 \leq i,j\leq m-1,}{ i\neq j}}\sqrt {p_i}\sqrt{p_j},$$ is a function of
  probability vectors $(p_0, p_1, p_2, \cdots p_{m-1})$, which  attains its maximum value when each $p_i = \frac{1}{m}$, and the maximum value is $\frac{m(m-1)}{m} = {m-1}$. We also note that $m - 1 \leq N \leq m(m-1)$. So, when $p_i$'s are all equal and = $\frac{1}{m}$ we have $$\frac{N}{L^2} \leq  \frac{m}{m-1} \leq 2$$ for $m \geq 2$

  Throughout this section we will assume that $\frac{N}{L^2} \leq A$ for some constant $A$ independent of $m$, a hypothesis which  holds when non-zero coefficients of $P$ are
  positive and equal in magnitude. Note that the requirement that $\frac{N}{L^2}$ be bounded implies that $L$ can not be close to zero, which in turn implies that a sequence of such polynomials stays away from 1 in absolute value at $z=1$, and so can not be ultraflat.\\

Consider the random variables  $X(k) =z^{n_k} - \overline{a_k}$ with respect to the measure $\nu$. We write $m(k,l) = \int_{S^1}X(k){\overline{X}}_ld\nu$, $-N \leq k,l\leq N, k, l \neq 0$ and $M$ for the correlation matrix with entries $m(k,l), -N \leq k,l\leq N, k, l \neq 0$. Let $r$ denote the sum of the entries of the matrix $M$. Note that $r= (MV, V)$, where $V$ is the $1\times 2N$ vector with all entries 1, so that $M \geq 0$, since a correlation matrix is non-negative definite. \\

We will now consider a sequence $P_j(z), j=1,2,\cdots$ of polynomials of the type (4).
The symbols $m_j, p_j(k), a_{k,j}, n_{k,j}, N_j, L_j, m_j(k,l), r_j$ will then have the obvious meaning.\\

   One of the purpose of this paper is to prove the following theorem.\\

 \begin{Th}\label{th7}
 If $\frac{N_j}{L_j^2}, j = 1,2, \cdots$ remain bounded and  $~~\ds \lim_{j\rightarrow \infty}|P_j(z)| = 1$ a.e. $(dz)$ then
 $\frac{r_j}{N_j}\rightarrow \infty$ as $j\rightarrow \infty$.
\end{Th}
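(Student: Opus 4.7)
The starting point is to recognize $r_j$ as the variance of a real trigonometric polynomial under the probability measure $\nu_j := |P_j|^2\, dz$. Since $n_{-k,j} = -n_{k,j}$ and $a_{-k,j} = \overline{a_{k,j}}$, the polynomial $T_j(z) := \sum_{k \neq 0} z^{n_{k,j}}$ is real-valued, and $\mathbb{E}_{\nu_j}[T_j] = L_j$. Hence
\[
r_j \;=\; \int_{S^1} (T_j - L_j)^2\, |P_j|^2\, dz \;=\; \int_{S^1} T_j^2\, |P_j|^2\, dz - L_j^2.
\]
Combining this with the Lebesgue moments $\int T_j\, dz = 0$ and $\int T_j^2\, dz = 2N_j$ gives the identity
\[
r_j - 2N_j + L_j^2 \;=\; \int_{S^1} T_j^2 \bigl(|P_j|^2 - 1\bigr)\, dz,
\]
so proving $r_j / N_j \to \infty$ is equivalent to showing that this right-hand side grows strictly faster than $N_j + L_j^2$.

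\textbf{Passing to a generalized Riesz product.} A direct estimate via Scheff\'e's $L^1$-convergence $\||P_j|^2 - 1\|_1 \to 0$ is blocked, since $\|T_j\|_\infty^2 = 4N_j^2$ grows too fast to be absorbed by an $L^1$ bound alone. I would instead invoke the dissociation machinery of Section~3. Choose exponents $1 = M_1 < M_2 < \cdots$ large enough that the polynomials $|P_j(z^{M_j})|^2$ are pairwise dissociated. Because $z \mapsto z^{M_j}$ is measure-preserving, this substitution leaves $r_j, N_j, L_j$ and the a.e.-flatness hypothesis intact, so we may henceforth assume the $|P_j|^2$ themselves to be dissociated. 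The weak-$*$ limit
\[
\mu \;:=\; \lim_{k \to \infty} \prod_{j=1}^{k} |P_j|^2\, dz
\]
is then a probability measure, the partial products $\prod_{j \leq k} |P_j|$ converge in $L^1$ to $\sqrt{d\mu/dz}$, and a further subsequencing exploiting $|P_j| \to 1$ a.e.\ (via Borel--Cantelli / Egorov arguments on $\sum \log |P_j|$) can be arranged so that the same product also converges a.e.\ to a positive finite function. By the remark recalled at the end of Section~3, $\mu$ then has a non-trivial component equivalent to Lebesgue measure.

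\textbf{Conclusion via Lemma~4.2, and the main obstacle.} The concluding step is to invoke the new Lemma~4.2. I expect it to provide a dichotomy: boundedness of $r_j/N_j$ along a subsequence should force the generalized Riesz product $\mu$ to be singular with respect to $dz$, contradicting the absolutely continuous component produced in the previous paragraph. The principal obstacle is precisely this dichotomy --- one must convert the second-moment surplus $\int T_j^2(|P_j|^2 - 1)\, dz = r_j - 2N_j + L_j^2$ into a genuine quantitative singularity criterion for $\mu$. The hypothesis $N_j / L_j^2 \leq A$ enters here in an essential way: it ensures $L_j \geq \sqrt{N_j/A}$, which is at least of the natural Lebesgue fluctuation scale $\sqrt{2N_j}$ of $T_j$, so that the centering $T_j \mapsto T_j - L_j$ is genuinely substantial. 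Without such a lower bound on $L_j$, boundedness of $r_j/N_j$ would be compatible with $\mu = dz$ and the argument would collapse; with it, a bounded $r_j/N_j$ should be seen to contradict the absolute continuity of $\mu$, yielding the required $r_j/N_j \to \infty$.
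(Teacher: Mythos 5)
Your proposal is correct and follows essentially the same route as the paper: argue by contradiction assuming $r_j/N_j$ stays bounded along a subsequence, pass to a dissociated subsequence whose generalized Riesz product has finite nonzero value a.e.\ and hence an absolutely continuous part (the paper's Theorem 5.1), and derive singularity of that same product from the Peyri\`ere-type Lemma 4.2, whose hypothesis $\sum_j \min\{1,\sqrt{N_j/r_j}\}=\infty$ is exactly what boundedness of $r_j/N_j$ supplies. The ``dichotomy'' you only \emph{expect} from Lemma 4.2 is precisely its statement, and you correctly locate where the hypothesis $N_j/L_j^2 \leq A$ is used (inside the proof of that lemma, to make the $L^2(dz)$ series converge), so the step you flag as the principal obstacle is already covered by the cited lemma rather than being a gap in the proof of the theorem itself.
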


(Note that $\frac{N_j}{L_j^2} = \ds \frac{N_j}{(\mid P_j(1)\mid^2 -1)^2}$.)\\

We need the following lemma based on a method devised by Peyri\`ere \cite{Peyriere}\\

\begin{lem}\label{lem1}
  If $P_j(z), j =1,2,\cdots$ is a sequence of polynomials of $L^2(S^1, dz)$ norm 1 such that the squares of their absolute values are dissociated and
  $$\sum_{j=1}^\infty \min\Big\{ 1,\sqrt{\frac{N_j}{r_j}}\Big\}=\infty.$$ Then the weak limit $\mu$ of the measures $\prod_{j=1}^n| P_j(z)|^2dz, n=1,2,\cdots$, is singular to Lebesgue measure.
\end{lem}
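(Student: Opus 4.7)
The plan is to adapt Peyri\`ere's second-moment method to the generalized Riesz product setting. The natural test variables are the real polynomials $T_j := S_j - L_j$, where $S_j(z) = \sum_{k \neq 0} z^{n_{k,j}}$; the dissociation hypothesis will force them to be pairwise orthogonal under $\mu$, while their first and second moments under $dz$ and under $\mu$ differ in a precisely controllable way.

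First I would use dissociation to compute the relevant Fourier coefficients of $\mu$. For $n$ large enough, the constant term of $z^{n_{k,j}}\prod_{i=1}^n |P_i|^2$ has a unique monomial representation (picking $\overline{a_{k,j}}\,z^{-n_{k,j}}$ from $|P_j|^2$ and the constant $1$ from every other factor), so $\widehat{\mu}(-n_{k,j}) = \overline{a_{k,j}}$. The same argument applied to pairs $z^{n_{k,i}+n_{l,j}}$ with $i \neq j$ yields $\widehat{\mu}(-n_{k,i}-n_{l,j}) = \overline{a_{k,i}\,a_{l,j}}$. From these identities one reads off $\int T_j\, d\mu = 0$ and $\int T_i T_j\, d\mu = 0$ for $i \neq j$. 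Parseval gives the analogous statistics under $dz$: $\int T_j\, dz = -L_j$, $\mathrm{Var}_{dz}(T_j) = 2N_j$, and pairwise orthogonality. The second moment $\int T_j^2\, d\mu$ is then obtained by expanding $T_j^2 = S_j^2 - 2L_j S_j + L_j^2$, tracking which Fourier frequencies $n_{k,j} + n_{l,j}$ survive the dissociated product, and matching against the explicit identity
$$r_j \;=\; 2N_j - L_j^2 + \sum_{\substack{k,l,m\neq 0\\ n_{k,j}+n_{l,j}+n_{m,j}=0}} a_{m,j},$$
which follows directly from $r_j = \int T_j^2 |P_j|^2\, dz$. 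The upshot is that $\int T_j^2\, d\mu$ is comparable to $r_j$.

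The heart of the argument is a Peyri\`ere-type truncation. Set $\tau_j := 4\sqrt{\min(N_j, r_j)}$, let $\widetilde T_j := T_j \mathbf{1}_{\{|T_j|\leq \tau_j\}}$, and form the normalized partial sums
$$H_n(z) := \sum_{j=1}^n \frac{\widetilde T_j(z) - \int \widetilde T_j\, d\mu}{\tau_j}.$$
Chebyshev guarantees that the truncation loses only a constant fraction of the variance under both $dz$ and $|P_j|^2\, dz$, and hence under $\mu$. Pairwise orthogonality then gives uniform $L^2(\mu)$ bounds on $H_n - E_\mu H_n$, while direct Parseval yields the analogous $L^2(dz)$ bounds for $H_n - E_{dz} H_n$. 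The discrepancy $E_{dz} H_n - E_\mu H_n$ equals $\sum_{j=1}^n (L_j + \text{truncation bias})/\tau_j$, which, after routine estimates using $L_j^2 \leq 2N_j \sigma_j^2$ with $\sigma_j^2 = \sum_k |a_{k,j}|^2$, is bounded below by $c\sum_{j=1}^n \min\{1, \sqrt{N_j/r_j}\}$; the hypothesis makes this diverge. A Chebyshev--Borel--Cantelli argument then produces a set $A$ with $\mu(A) = 1$ and $dz(A) = 0$, giving $\mu \perp dz$.

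The main obstacle is the second-moment identification $\int T_j^2\, d\mu \asymp r_j$: the triple-frequency coincidences show up both in the cross terms of the Fourier expansion of $\mu$ and in the explicit formula for $r_j$, and one must verify via dissociation that the remaining contributions are negligible in the regime dictated by the ratio $r_j/N_j$. The secondary technical point is calibrating the truncation level $\tau_j$ so that the mean discrepancy produces exactly the series $\sum \min\{1,\sqrt{N_j/r_j}\}$ rather than some weaker quantity; once the variance computation is in place, this is the natural choice.
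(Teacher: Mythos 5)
Your moment computations are sound and coincide with what the paper does: dissociation gives $\int T_j\,d\mu=0$, $\int T_iT_j\,d\mu=0$ for $i\neq j$, and $\int T_j^2\,d\mu=\int T_j^2\,|P_j|^2\,dz=r_j$, while under $dz$ the mean of $T_j$ is $-L_j$ and its second moment is $2N_j+L_j^2$. The genuine gap is the truncation step, and it is fatal as written. Chebyshev at level $\tau_j=4\sqrt{\min(N_j,r_j)}$ only gives $\mathbb{P}(|T_j|>\tau_j)\le \E T_j^2/\tau_j^2$, which under $dz$ is $(2N_j+L_j^2)/(16\min(N_j,r_j))$ and need not be small (e.g.\ when $r_j\ll N_j$, or when $L_j^2\gg N_j$); even when it is small, a second-moment bound alone never guarantees that the truncated variable retains a constant fraction of the variance, nor that the truncation bias $\E\,|T_j|\mathbf{1}_{\{|T_j|>\tau_j\}}\le \E T_j^2/\tau_j$ is $o(L_j)$ --- the rare tail can carry most of the first and second moments. (Your inequality $L_j^2\le 2N_j\sigma_j^2$ points the wrong way: what is needed is the \emph{lower} bound $L_j^2\ge N_j/A$ from the section's standing hypothesis.) Moreover your two claims are mutually inconsistent: if the truncation keeps a constant fraction of the variance, then $\mathrm{Var}_\mu(\widetilde T_j)/\tau_j^2\ge c\,r_j/\min(N_j,r_j)\ge c$, so $\|H_n-\E_\mu H_n\|_{L^2(\mu)}^2$ grows at least linearly in $n$ and cannot be uniformly bounded. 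With equal weights the best available comparison is a variance of order $n$ against a mean discrepancy of order $\sum_{j\le n}s_j$ with $s_j=\min\{1,\sqrt{N_j/r_j}\}$, and $\sum_j s_j=\infty$ does not imply $\bigl(\sum_{j\le n}s_j\bigr)^2/n\to\infty$ (take $s_j=1/j$), so the Chebyshev--Borel--Cantelli argument cannot close under the stated hypothesis.

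The idea your proposal is missing is the $\ell^2$-weighting device, which is how the paper avoids truncation altogether. Since $\sum_j s_j=\infty$, Banach--Steinhaus provides positive $\lambda_j$ with $\sum_j\lambda_j^2<\infty$ and $\sum_j\lambda_j s_j=\infty$. Set $f_n=\sum_{j\le n}\frac{\lambda_j s_j}{L_j}\sum_k z^{n_{k,j}}$ and $g_n=\sum_{j\le n}\frac{\lambda_j s_j}{L_j}\sum_k\bigl(z^{n_{k,j}}-\overline{a_{k,j}}\bigr)$. Then $\|f_n-f_m\|_{L^2(dz)}^2=\sum 2\lambda_j^2s_j^2N_j/L_j^2$ and, by your own orthogonality computation, $\|g_n-g_m\|_{L^2(\mu)}^2=\sum\lambda_j^2s_j^2r_j/L_j^2\le\sum\lambda_j^2N_j/L_j^2$, both finite because $s_j^2r_j\le N_j$ and $N_j/L_j^2\le A$; only \emph{upper} bounds on second moments are needed, so no truncation is required. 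If $\mu$ were not singular to $dz$ there would be a point where subsequences of both $f_n$ and $g_n$ converge, yet $f_n-g_n=\sum_{j\le n}\lambda_js_j\to\infty$. You should rebuild the second half of your argument around this weighting rather than around truncation.
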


 \begin{proof}

  Write $s_j = \min \Big\{1, \sqrt{\frac{N_j}{r_j}}\Big\}$. Since $\sum_{j=1}^\infty s_j = \infty$, by Banach-Steinhaus theorem there is an $l^2$ sequence $\lambda_j, j=1,2,\cdots$ of positive real numbers
such that $$\sum_{j=1}^\infty \lambda_js_j = \infty.  ~~~\eqno (5)$$
Consequently, since $\frac{N_j}{L_j^2} $'s are assumed to be bounded,
$$\sum_{j=1}^\infty \frac{\lambda_j^2s_j^2}{ L^2_j}N_j < \infty  ~~~~~\eqno (6)$$

 Let $A_j = \{n_{k,j}: -N_j\leq k \leq N_j, k \neq 0\}$.
Let $V_j$ be the $1\times A_j$ matrix with all entries equal to $\frac{\lambda_j s_j}{L_j}.$
The squared Euclidean norm of this vector is $\frac{\lambda_j^2s_j^2}{L_j^2}\times 2N_j$ which when summed over $j$ is convergent by equation (6) above. Let $U_j$ be the $1\times A_j$ matrix with entries $u(n_k, j) = a_{k,j}, -N_j \leq k \leq N_j, k\neq 0$. Then the dot product $U_j\cdot V_j = L_j\times \frac{\lambda_js_j}{L_j}$ which diverges when summed over $j$ by the choice of $\lambda_j$'s.\\

Let $$f_n = \sum_{j=1}^n\sum_{k\in A_j}\frac{\lambda_js_j}{ L_j}z^{n_k},$$
$$g_n = \sum_{j=1}^n\sum_{k\in A_j}\frac{\lambda_js_j}{L_j}(X_j(k)).$$
$$= \sum_{j=1}^n\sum_{k\in A_j}\frac{\lambda_k s_j}{L_j}(z^{n_k} - \overline{a_{k,j}})$$
  Now for $m  < n$
  $$ \int_{S^1}\mid f_n - f_m\mid^2dz = \sum_{j=m+1}^n\sum_{k,l \in A_j}\frac{\lambda_j^2s_j^2}{L^2_j} \int_{S^1}z^{n_k-n_l}dz = \sum_{j=m+1}^n2\frac{\lambda_j^2s_j^2}{L^2_j}N_j$$
  $$\rightarrow 0 ~~~{\rm{as}} ~~~m,n \rightarrow \infty,$$
  and  under the assumption  of dissociation of the polynomials $\mid P_j\mid^2, j =1,2,\dots$,

  $$\int_{S^1}\mid g_n - g_m\mid^2d\mu =\sum_{j=m+1}^n\sum_{k,l\in A_j}\frac{\lambda_j^2s_j^2}{L^2_j}m_j(k,l) \leq \sum_{j=m+1}^n\frac{\lambda_j^2s_j^2}{L_j^2}r_j$$
  (since $s_j = \min\Big\{1,\sqrt{\frac{N_j}{r_j}}\Big\}$, in case $s_j = 1$, we have $N_j\geq r_j$, otherwise $s_j^2 = \frac{N_j}{r_j}$,)
  $$\leq \sum_{j=m+1}^n\frac{\lambda_j^2}{L_j^2}N_j\rightarrow 0 ~~{\rm{as}} ~~~~ m,n \rightarrow \infty$$

  We conclude that $f_n$'s converge in $L^2(S^1, dz)$ to a function whose norm is $\sum_{j=1}^\infty2\frac{\lambda_j^2s_j^2}{L_j^2}\times N_j$, and $g_n$'s converge in $L^2(S^1, d\mu)$ to a function whose norm is no more than $\sum_{j=1}^\infty\frac{\lambda^2_j}{L_j^2}N_j$.
If $\mu$ is not singular to $dz$, then there is a sequence of $l_k, k=1,2, \cdots$ of natural numbers and a $z_0\in S^1$ such that $f_{l_k}(z_0), g_{l_k}(z_0), k=1,2,\cdots$ converge to a finite limits, which in turn implies that
$$f_{l_k}(z_0) - g_{l_k}(z_0) = \sum_{j=1}^{l_k}\sum_{u \in A_j}\frac{\lambda_js_j}{L_j}a_{u,j}$$
 $$= \sum_{j=1}^{l_k} \lambda_j s_j$$ is convergent as $k\rightarrow \infty$ contrary to equation (5).\\
\end{proof}
\section{a.e.($dz)$ Flat Sequences and Generalized Riesz Product}

\begin{Th}\label{th7}\cite{Abd-Nad}.   Let $P_j, j =1,2,\cdots$ be a sequence of non-constant polynomials
of $L^2(S^1,dz)$ norm 1 such that $\lim_{j\rightarrow \infty}\mid P_j(z)\mid =1 $ a.e. $(dz)$ then there exists a subsequence $P_{j_k}, k=1,2,\cdots$ and natural numbers $l_1 < l_2 < \cdots$ such that the polynomials $P_{j_k}(z^{l_k}), k=1,2,\cdots $ are dissociated and the infinite product $\prod_{k=1}^\infty | P_{j_k}(z^{l_k})|^2$ has finite nonzero value a.e $(dz)$.
\end{Th}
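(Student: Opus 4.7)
The plan is to combine the dissociation construction recalled in Section~3 with a Borel--Cantelli selection argument that exploits the measure-preserving property of the substitutions $z\mapsto z^{l_k}$ on $(S^1,dz)$. Since $|P_j(z)|\to 1$ almost everywhere on the probability space $(S^1,dz)$, convergence in measure holds as well, so I would first extract a subsequence $P_{j_k}$ satisfying the quantitative decay
$$dz\bigl\{z\in S^1 : \bigl|\,|P_{j_k}(z)|^2-1\bigr|>2^{-k}\bigr\}<2^{-k},\qquad k\geq 1,$$
which is possible by enlarging $j_k$ one step at a time.

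Having fixed this subsequence, I would next choose integers $1=l_1<l_2<\cdots$ inductively so that the squared moduli $|P_{j_k}(z^{l_k})|^2$, $k=1,2,\ldots$, are dissociated, exactly as in the recipe recalled in Section~3: given $l_1,\ldots,l_{k-1}$, the partial product $\prod_{i<k}|P_{j_i}(z^{l_i})|^2$ has some finite degree $D_{k-1}$, and any $l_k$ strictly larger than $2D_{k-1}\deg(|P_{j_k}|^2)$ prevents exponent collisions at stage $k$. Because each substitution $z\mapsto z^{l_k}$ preserves normalized Lebesgue measure, the decay estimate survives the substitution:
$$dz\bigl\{z\in S^1:\bigl|\,|P_{j_k}(z^{l_k})|^2-1\bigr|>2^{-k}\bigr\}=dz\bigl\{w\in S^1:\bigl|\,|P_{j_k}(w)|^2-1\bigr|>2^{-k}\bigr\}<2^{-k}.$$
The first Borel--Cantelli lemma then yields, for almost every $z\in S^1$, an index $k_0(z)$ past which $\bigl|\,|P_{j_k}(z^{l_k})|^2-1\bigr|\leq 2^{-k}$.

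For such $z$, the tail $\sum_{k\geq k_0(z)}\bigl|\log|P_{j_k}(z^{l_k})|^2\bigr|$ is dominated by a convergent geometric series, so the tail product $\prod_{k\geq k_0(z)}|P_{j_k}(z^{l_k})|^2$ converges to a finite, strictly positive limit. The finitely many initial factors are nonzero outside the countable (hence null) set $\bigcup_k\{z:P_{j_k}(z^{l_k})=0\}$, which together with the Borel--Cantelli null set completes the proof that the infinite product is finite and nonzero almost everywhere. I do not anticipate a serious technical obstacle; the delicate point to keep in mind is to perform the two selections in the correct order --- first the indices $j_k$ (to secure the measure decay from a.e.\ convergence) and then the exponents $l_k$ (to secure dissociation) --- since only the measure bound must be preserved under $z\mapsto z^{l_k}$, and this is automatic from measure-preservation.
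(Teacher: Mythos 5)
Your proposal is correct and follows essentially the same route as the paper: the paper extracts the subsequence via Egorov's theorem to get summable measures of exceptional sets where $\bigl|1-|P_{j_k}|\bigr|\geq 2^{-k}$, transfers these bounds through the measure-preserving substitutions $z\mapsto z^{l_k}$, and concludes by a Borel--Cantelli ($\limsup$ of null-summable sets) argument plus the countability of the zero sets, exactly as you do. The only cosmetic difference is that you invoke convergence in measure directly rather than Egorov, which yields the same quantitative decay.
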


\begin{proof}
 Since  $\mid P_j(z)\mid \rightarrow 1$ as $j \rightarrow \infty$ a.e. $(dz)$, by Egorov's theorem we can extract a subsequence $P_{j_k}, k = 1,2,\cdots$ such that
the sets $$E_k \egdef \Big\{z: \mid (1- \mid P_{j_l}(z)\mid )\mid < \frac{1}{2^k} ~~\forall ~~l \geq k \Big\} $$
increase to $S^1$ (except for a $dz$ null set), and $\ds \sum_{k=1}^\infty (1 -dz(E_k)) < \infty.$
 Write $Q_k = P_{j_k}$. Then for $z \in E_k$, $|1 - |Q_k(z)|| < \frac{1}{2^k}$. We choose a sequence  $l_1 <  l_2, \cdots$  of natural numbers such that
the polynomials $| P_{j_k}(z^{l_k})|^2, k=1,2,\cdots$ are all dissociated.
Consider
$$\prod_{k=1}^n | Q_k(z^{l_k}) |^2$$. We show that $\lim_{K\rightarrow \infty}\prod_{k=1}^K\mid Q_k(z^{l_k})\mid$ exists and
is nonzero a.e. $(dz)$.\\

 Now the maps $S_k: z \longmapsto z^k, k=1,2,\cdots$ preserve the measure $(dz)$, and since $\sum_{k=1}^\infty dz(S^1 -E_k) < \infty$ we have
 $\sum_{k=1}^\infty dz(S^{-l_k}(S^1 - E_k)) < \infty$. Let $F_k = S^{-l_k}(S^1 - E_k)$ and
 $F = \limsup_{k\rightarrow \infty} F_k = \cap_{k=1}^\infty\cup_{l=k}^\infty F_l$.
 Then $dz(F) =0$. If $z\notin F$, $z \notin S^{-l_k}(S^1 - E_k)$ hold for all but
 finitely many $k$, which in turn implies that $S^{l_k}z \in E_k$ for all but finitely many $k$.
 Thus, if $z \notin F$, then $\mid (1 - \mid Q_k(z^{l_k})\mid)\mid < \frac{1}{2^k}$ for all but finitely many $k$. Also the set of points $z$ for which some finite product $\prod_{k=1}^M\mid Q_k(z^{l_k})\mid$ vanishes is countable. Clearly
$\lim_{M\rightarrow \infty}\prod_{k=1}^M| Q_k(z^{l_k})|$
is nonzero a.e. $(dz)$ as claimed.\\

\end{proof}

\section{Proof of Theorem 4.1.}

Let $\frac{r_j}{N_j}, j=1,2,\cdots$ be as in the theorem. Assume that this sequence  does not tend to $\infty$ as $j \rightarrow \infty$, then over a subsequence the ratios $\frac{r_j}{N_j}, j=1,2,\cdots$ remain bounded. Without loss of generality we assume that the ratios $\frac{r_j}{N_j}, j=1,2,\cdots$ are bounded. So the reciprocal ratios  $\frac{N_j}{r_j}, j=1,2,\cdots$ sum to infinity over every subsequence.
Under the hypothesis that $\mid P_j\mid \rightarrow 1$ as $j\rightarrow \infty$, by theorem 5.1, we get a subsequence
$P_{j_k} = Q_k, k=1,2, \cdots$ and natural numbers $l_1 < l_2 < \cdots$ such that the polynomials
$\mid Q_k((z^{l_k})\mid^2, k=1,2,\cdots $ are dissociated and the infinite product
$\prod_{k=1}^\infty\mid Q_k(z^{l_k})\mid^2$ has finite non-zero limit a.e. $(dz)$. Also, since  $\mid Q_k(z^{l_k})\mid^2$'s are dissociated,  the measures $\mu_n \egdef \prod_{k=1}^n\mid Q_k(z^{l_k})\mid^2dz$ converge weakly to a measure $\mu$ on $S^1$
for which $\frac{d\mu}{dz} > 0$ a.e $(dz)$, indeed
$$\frac{d\mu}{dz} = \prod_{k=1}^\infty\mid Q(z^{l_k})\mid^2 ~~~a.e. (dz) .$$
 Since the map $z\rightarrow z^{l_k}$ preserves the Lebesgue measure on $S^1$,  the $m_{j_k}(u,v)$'s for  $\mid P_{j_k}(z)\mid^2dz$ are the same as for  $\mid P_{j_k}(z^{l_k})\mid^2dz$. Since $\sum_{k=1}^\infty \sqrt{\frac{N_{j_{k}}}{r_{j_k}}} = \infty$, by Lemma 4.2 $\mu$ is singular to $(dz)$. The contradiction implies that $\frac{r_j}{N_j} \rightarrow \infty$ as $j \rightarrow \infty$.
 This completes the proof of the theorem.

\begin{Cor}{\label{cor1}}
Let $P_j = \ds \frac{1}{\sqrt{m_j}}\Big(1 + \sum_{i=1}^{m_j-1}z^{R_{i,j}}\Big), j =1,2,\cdots$. If the infinite product $\prod_{j=1}^\infty\mid P_j(z)\mid^2$ has finite non-zero value a.e. $(dz)$ then $\frac{r_j}{N_j}\rightarrow \infty$ as $j \rightarrow \infty$.   If the sequence $\frac{r_j}{N_j}, j =1,2,\cdots$ is  bounded  then the infinite product does not have a finite non-zero limit on a set of positive $dz$ measure.
\end{Cor}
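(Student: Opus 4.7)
The plan is to deduce the corollary directly from Theorem 4.1 by checking that the boundedness hypothesis on $N_j/L_j^2$ there is automatic for polynomials in the class $B$. First I would observe that $P_j = \frac{1}{\sqrt{m_j}}\bigl(1+\sum_{i=1}^{m_j-1} z^{R_{i,j}}\bigr)$ fits the general form (4) of Section 4 with $\epsilon_i=1$ and $p_i = 1/m_j$ for every $i$. The computation performed there immediately after (4) then gives $L_j = \sum_{i\neq k}\sqrt{p_i}\sqrt{p_{k}} = m_j - 1$, and combined with $N_j\le m_j(m_j-1)$ this yields $N_j/L_j^2 \le m_j/(m_j-1)\le 2$, uniformly in $j$. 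So the hypothesis of Theorem 4.1 on $N_j/L_j^2$ is met automatically for the class $B$.

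Next I would pass from the infinite-product hypothesis to the pointwise flatness hypothesis required by Theorem 4.1, using the elementary fact that a convergent infinite product $\prod_j a_j$ of positive real numbers with finite non-zero limit forces $a_j\to 1$. Applied pointwise, this shows that if $\prod_{j=1}^\infty |P_j(z)|^2$ has finite non-zero value a.e. $(dz)$, then $|P_j(z)|^2\to 1$, and hence $|P_j(z)|\to 1$, a.e. $(dz)$. Theorem 4.1 then applies and gives $r_j/N_j\to \infty$, which is the first assertion.

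The second assertion is the contrapositive of the first. If $r_j/N_j$ remains bounded then a fortiori it does not tend to infinity, so by the first assertion the product $\prod_{j=1}^\infty |P_j(z)|^2$ cannot have a finite non-zero limit almost everywhere, i.e., it fails to do so on a set of positive $dz$-measure.

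I do not expect any genuine obstacle here: all substantive work has already been done in Theorem 4.1 (whose proof in turn rests on Lemma 4.2 via the dissociation trick of Theorem 5.1). The corollary itself only requires the bookkeeping observation that polynomials in class $B$ have equal non-zero coefficients and therefore automatically verify the boundedness of $N_j/L_j^2$, plus the trivial reduction from convergence of the infinite product to pointwise convergence of the factors to $1$.
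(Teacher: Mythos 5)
Your proof is correct and takes essentially the route the paper intends: the bound $N_j/L_j^2\le m_j/(m_j-1)\le 2$ for class $B$ is exactly the computation carried out in Section 4 for equal coefficients, and the corollary then follows from Theorem 4.1 once one notes that convergence of the partial products to a finite non-zero limit forces $|P_j(z)|^2\to 1$ a.e.\ $(dz)$, the second assertion being the contrapositive. The paper states the corollary without a separate proof, precisely because it is this immediate consequence of Theorem 4.1.
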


\begin{Cor}{\label{ cor1}}
 If the quantities $\frac{N}{r}$ for the polynomials of the type
  $$P(z) = \frac{1}{\sqrt m}\Big(1 + z^{R_1} + z^{R_2} + \cdots + z^{R_{m-1}}\Big)$$
is bounded away from zero then a generalized product associated with a measure preserving
 rank one map can not have a Lebesgue component.

\end{Cor}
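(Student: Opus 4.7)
The plan is to reduce the statement directly to Lemma 4.2, using the fact that the spectrum of a rank one measure preserving transformation is represented (after removing possible discrete atoms) by a generalized Riesz product $\mu$ built from polynomials of exactly the form
$$P_j(z) = \frac{1}{\sqrt{m_j}}\Big(1+\sum_{i=1}^{m_j-1}z^{R_{i,j}}\Big),$$
so that $\mu$ is the weak limit of the measures $\prod_{k=1}^{n}|P_k(z^{\ell_k})|^2\, dz$ for a suitable choice of natural numbers $1=\ell_1<\ell_2<\cdots$ making $|P_k(z^{\ell_k})|^2$ dissociated. The Lebesgue component of $\mu$ being nonzero would mean exactly that $\mu$ fails to be singular with respect to $dz$, so the strategy is to show that Lemma 4.2 applies to the dissociated family $\{|P_k(z^{\ell_k})|^2\}$.

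First I would observe that the two key numerical quantities attached to a polynomial from class B, namely $N$ (the number of non-zero frequencies in $|P|^2$ restricted to positive indices) and $r$ (the sum of entries of the relevant Gram matrix), are invariant under the substitution $z\mapsto z^{\ell}$, because $\ell$ is a positive integer and the map $z\mapsto z^\ell$ preserves $dz$; hence $N_k/r_k$ for $|P_k(z^{\ell_k})|^2$ coincides with $N_k/r_k$ for $|P_k(z)|^2$. Next, for polynomials of the stated form one has $L_k=|P_k(1)|^2-1=m_k-1$ and $N_k\le m_k(m_k-1)$, so $N_k/L_k^2\le m_k/(m_k-1)\le 2$ for $m_k\ge 2$, which supplies automatically the boundedness hypothesis on $N_k/L_k^2$ required by Lemma 4.2.

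Now the hypothesis that $N/r$ is bounded away from $0$ over the class means there exists $c>0$ with $N_k/r_k\ge c$ for all $k$, whence
$$\min\!\Big\{1,\sqrt{N_k/r_k}\Big\}\ge \min\{1,\sqrt{c}\}>0$$
for every $k$, so the series $\sum_{k\ge 1}\min\{1,\sqrt{N_k/r_k}\}$ diverges trivially. Lemma 4.2 then applies to the dissociated sequence $|P_k(z^{\ell_k})|^2$ and yields that the weak limit $\mu$ is singular with respect to Lebesgue measure; consequently $\mu$ has no Lebesgue component, which is precisely the conclusion.

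There is no real obstacle in this argument: once Lemma 4.2 is available, the only thing to check is the invariance of $N_k$ and $r_k$ under the measure-preserving substitutions $z\mapsto z^{\ell_k}$ used in the generalized Riesz product construction, and the automatic boundedness of $N_k/L_k^2$ for class B polynomials. The interesting point worth emphasizing in the write-up is that, in contrast with the ultraflat case where $r/N$ is forced to tend to $2$, here $r/N$ must grow without bound for a Lebesgue component to survive, so any uniform upper bound on $r/N$ obstructs the existence of rank one maps with Lebesgue spectral component.
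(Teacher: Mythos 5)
Your argument is correct, but it takes a genuinely different route from the paper's. The paper proves the corollary by contradiction through Theorem 4.1: if the generalized Riesz product had a Lebesgue component, then (by a fact imported from the calculus of generalized Riesz products in \cite{Abd-Nad}) there would exist a sequence of class-B polynomials converging in absolute value to $1$ a.e.\ $(dz)$, and Theorem 4.1 would then force $r_j/N_j\to\infty$, contradicting the hypothesis that $N/r$ is bounded away from zero. You instead bypass Theorem 4.1 and Theorem 5.1 entirely and apply Lemma 4.2 directly to the Riesz product defining the spectral type: the hypothesis $N_k/r_k\ge c>0$ makes $\sum_k\min\{1,\sqrt{N_k/r_k}\}$ diverge trivially, the bound $N_k/L_k^2\le 2$ holds automatically for class-B polynomials, and the invariance of $N_k$ and $r_k$ under the measure-preserving substitutions $z\mapsto z^{\ell_k}$ lets the lemma conclude that $\mu$ is singular, hence has no Lebesgue component. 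Your version is more economical and more self-contained: it avoids the nontrivial implication ``Lebesgue component $\Rightarrow$ existence of an a.e.\ flat sequence,'' which the paper takes from \cite{Abd-Nad} without proof. What the paper's route buys in exchange is the emphasis on the necessary condition $r_j/N_j\to\infty$ itself as the central obstruction, which is the main theorem of the note. The only point you should make explicit in a final write-up is that the generalized Riesz product attached to a rank one map is indeed of the dissociated form $\prod_k|P_k(z^{\ell_k})|^2\,dz$ required by Lemma 4.2; the paper is equally informal on this, so it is a presentational remark rather than a gap.
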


\begin{proof}

For if there is a Lebesgue component for some such generalized Riesz product,then there is a sequence of polynomials of the given type which converge in absolute value to 1 a.e. $(dz)$ so that the quantities $\frac{r}{N}$ for such polynomials diverge, contradicting the
hypothesis.
\end{proof}


\begin{thebibliography}{}
%
% and use \bibitem to create references.
%
\bibitem{Abd-Nad}
E. H. ~el Abdalaoui and M. Nadkarni {\em Calculus of Generalized Riesz Products ,}
preprint 2013.  http://fr.arxiv.org/pdf/1307.6513.
\bibitem{Abd-Nad1}
E. H. ~el Abdalaoui and M. Nadkarni {\em  A non-singular transformation whose Spectrum has Lebesgue component of multiplicity one ,} preprint 2013. http://fr.arxiv.org/pdf/1402.1312.

\bibitem{Aistleitner}
C. Aistleitner, {\em On a problem of Bourgain concerning the $L^1$-norm of exponential sums,}
Math. Z., to appear.
\bibitem{Bom}
E. Bombieri \& J. Bourgain, {
On Kahane's ultraflat polynomials,}  J. Eur. Math. Soc. (JEMS) 11 (2009), no. 3, 627–703.
\bibitem {Bourgain}
J. ~Bourgain, {\em On the spectral type of Ornstein class one
transformations, }{\it Isr. J. Math. },{\bf 84
}(1993), 53-63.

\bibitem{Borwein1}
 P. Borwein, M. J. Mossinghoff, {\em Barker sequences and flat polynomials}, Number Theory and Polynomials, 71-88, Lond. Math. Soc. Lecture Notes Series, 352, Cambridge Univ Press, Cambridge, 2008.

 \bibitem{Borwein2}
 P. Borwein, M. J. Mossinghoff, {\em Wiefrich pairs and Barker sequences II}, Preprint, July 2013.

\bibitem{Borwein3}
P. Borwein, T. Erd\'elyi, and F. Littmann, Polynomials with
coefficients from a finite set, Trans. Amer. Math. Soc. 360
(2008), 5145-5154.



\bibitem{Brown} B. Brown, {\em  Singular infinitely divisible distributions whose characteristic functions vanish at infinity,} Math. Proc. Cambridge Philos. Soc. 82 (1977), no. 2, 277-287.



\bibitem {Nadkarni1}
J. ~R. ~Choksi ~and ~M. ~G. ~Nadkarni , {\em The maximal spectral
type of rank one transformation, } {\it Can. Math. Bull., } {\bf 37
(1)} (1994), 29-36.

\bibitem{Newman}
 D. J. Newman, {\em An $L^{1}$ extremal problem for polynomials,} Proc. Amer. Math. Soc. 16 1965 1287–1290.

\bibitem{dow}
T. Downarowich, Y. Lacroix, {\em Merit Factors and Morse Sequences} Theoretical Computer Science, \textbf(209) (1998), 377-387.

\bibitem{Graham}
S. W. Graham, S. W \& G. Kolesnik,{\em G. van der Corput's method of exponential sums,} London Mathematical Society Lecture Note Series, 126. Cambridge University Press, Cambridge, 1991.

\bibitem{Guenais}
M. ~Guenais, {\em Morse cocycles and simple Lebesgue spectrum } Ergodic Theory Dynam. Systems,
 \textbf{19} (1999), no. 2, 437-446.

\bibitem{H-L}
G.H. Hardy and J. E. Littlewood, Some Problems of Diophantine Approximation, A Remarkable
Trigonometrical Series, Proc. of the National Acad., of Sciences, (2) 1916, 583-586.

\bibitem{Host-Mela-Parreau}
 B. Host, J.-F. M\'ela, F. Parreau, {\em Non-singular transformations and spectral analysis of measures}, Bull. Soc. math. France {\textbf{119}} (1991), 33-90.

%\bibitem{Friedman}{\em Introduction to Ergodic Theory}, Van Nostrand-Reinhold, (1970).

\bibitem{Kahane}
J.~-P. Kahane, {\em Sur les polyn\^omes \`a coefficients unimodulaires,} Bull. London Math. Soc. 12, 1980, 321–342. ,

\bibitem{Littlewood}
Littlewood, J. E. (1966). {\em On polynomials $\sum^n  z^m$,$\sum_{m=0}^n e^{\alpha i} z^m$, $z = e^{\theta i}$}, J. London Math. Soc. 41, 1966, 367–376.

\bibitem{Littlewood-I}
J.~~E.~~Littlewood, On the mean values of certain trigonometric polynomials. II. Illinois J. Math., (6) 1962, 1-39.

\bibitem{N-K}
L. Kuipers \& H. Niederreiter,  \emph{Uniform Distribution of Sequences,} Dover Publishing, 2006.

\bibitem{Nad}
M.~G. ~Nadkarni, {\it Spectral theory of dynamical systems},
Birkh\"auser, Cambridge, MA, 1998.


\bibitem{Peyriere}
J. Peyri\`ere, {\it \'Etude de quelques propri\'et\'es
des produits de Riesz}, Ann.\ Inst.\ Fourier,
Grenoble \textbf{25}, 2 (1975), 127--169.

\bibitem{Queffelec}
H. Queffelec and B. Saffari, {\em On Bernstein’s Inequality and Kahane’s Ultraflat Polynomials}, J. of Fourier Ana. and App.
Volume 2, Number 6, 1996, 519-582.
\bibitem{Salem}
R. Salem, { Essais sur les s\'eries trigonom\'etriques}, Hermann \& Cie, \'editeurs, 1940.
\bibitem{Tichmarsh}
E. C. Titchmarsh, \emph{The theory of the Riemann zeta-function,} Second edition. Edited and with a preface by D. R. Heath-Brown. The Clarendon Press, Oxford University Press, New York, 1986.

\bibitem {Zygmund}
A. ~Zygmund, {\it Trigonometric series {\rm vol. II}}, second ed.,
Cambridge Univ. Press, Cambridge, 1959.
\end{thebibliography}
\end{document}